\newtheorem{theorem}{Theorem}[section]
\newtheorem{definition}[theorem]{Definition}
\newtheorem{example}[theorem]{Example}
\newtheorem{question}[theorem]{Question}
\title{This is the title}
\begin{document}
	\vspace{0.9cm}	
\hrule\hrule\hrule\hrule\hrule
\vspace{0.3cm}	
\begin{center}
{\bf{p-adic Ghobber-Jaming Uncertainty Principle}}\\
\vspace{0.3cm}
\hrule\hrule\hrule
\vspace{0.3cm}
\textbf{K. Mahesh Krishna}\\
School of Mathematics and Natural Sciences\\
Chanakya University Global Campus\\
NH-648, Haraluru Village\\
Devanahalli Taluk, 	Bengaluru  North District\\
Karnataka State 562 110 India \\
Email: kmaheshak@gmail.com\\

Date: \today
\hrule\hrule
\end{center}

\hrule\hrule
\vspace{0.5cm}
\textbf{Abstract}: Let $\{\tau_j\}_{j=1}^n$ and $\{\omega_k\}_{k=1}^n$ be  two orthonormal bases  for a finite dimensional p-adic Hilbert space $\mathcal{X}$. Let  $M,N\subseteq \{1, \dots, n\}$ be such that 
\begin{align*}
\displaystyle \max_{j \in M, k \in N}|\langle \tau_j, \omega_k \rangle|<1,
\end{align*}
where  $o(M)$  is the cardinality of  $M$. Then for all $x \in \mathcal{X}$,  we show that 
\begin{align}\label{FGJU}
	\|x\|\leq  \left(\frac{1}{1-\displaystyle \max_{j \in M, k \in N}|\langle \tau_j, \omega_k \rangle|}\right)\max\left\{\displaystyle \max_{j \in M^c}|\langle x, \tau_j\rangle |, \displaystyle \max_{k \in N^c}|\langle x, \omega_k\rangle |\right\}.
\end{align}
 We call Inequality (\ref{FGJU}) as \textbf{p-adic Ghobber-Jaming Uncertainty Principle}. Inequality (\ref{FGJU}) is the p-adic version of uncertainty principle obtained by Ghobber and Jaming \textit{[Linear Algebra Appl., 2011]}. We also derive analogues of Inequality (\ref{FGJU}) for non-Archimedean Banach spaces.

\textbf{Keywords}:   Uncertainty Principle, Orthonormal Basis,  p-adic Hilbert space, non-Archimedean Banach space.

\textbf{Mathematics Subject Classification (2020)}: 12J25, 46S10, 47S10, 11D88.\\

\hrule

\hrule
\section{Introduction}
Let $d \in \mathbb{N}$ and  $~\widehat{}:\mathcal{L}^2 (\mathbb{R}^d) \to \mathcal{L}^2 (\mathbb{R}^d)$ be the unitary Fourier transform obtained by extending uniquely the bounded linear operator 
\begin{align*}
\widehat{}:\mathcal{L}^1 (\mathbb{R}^d)\cap  \mathcal{L}^2	 (\mathbb{R}^d) \ni f \mapsto \widehat{f} \in  C_0(\mathbb{R}^d); \quad \widehat{f}: \mathbb{R}^d \ni \xi \mapsto \widehat{f}(\xi)\coloneqq \int_{\mathbb{R}^d}	f(x)e^{-2\pi i  \langle x, \xi \rangle}\,dx\ \in \mathbb{C}.
\end{align*}
In 2007, Jaming \cite{JAMING} extended the uncertainty principle obtained by Nazarov for $\mathbb{R}$ in 1993 \cite{NAZAROV} (cf. \cite{HARVINJORICKE}). In the following theorem, Lebesgue measure on $\mathbb{R}^d$ is denoted by $m$. Mean width of a measurable subset  $E$ of $\mathbb{R}^d$ having finite measure  is denoted by $w(E)$. Complement of  $E\subseteq\mathbb{R}^d$ is denoted by $E^c$.
\begin{theorem} \cite{JAMING, NAZAROV}\label{JN} (\textbf{Nazarov-Jaming Uncertainty Principle}) For each $d \in \mathbb{N}$, there exists a universal constant $C_d$ (depends upon $d$) satisfying the following: If $E, F \subseteq \mathbb{R}^d$ are measurable subsets having finite measure, then for all $f \in \mathcal{L}^2 (\mathbb{R}^d)$, 
	\begin{align*}
\int_{\mathbb{R}^d}	|f(x)|^2\,dx &\leq C_d e^{C_d \min \{m(E)m(F), m(E)^\frac{1}{d}w(F), m(F)^\frac{1}{d}w(E)\}} 
&\left[\int_{E^c}	|f(x)|^2\,dx+\int_{F^c}	|\widehat{f}(\xi)|^2\,d\xi\right].
	\end{align*} 
In particular, if $f$ is supported on $E$ and 	$\widehat{f}$ is supported on $F$, then $f=0$. 
\end{theorem}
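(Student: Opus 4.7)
The plan is to reduce the inequality to a quantitative operator-norm bound on the composition of spatial and frequency cutoffs, and then establish that bound via a Tur\'an-type estimate on exponential polynomials.

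First, note that the ``in particular'' clause is immediate from the main inequality: if $f$ is supported on $E$ and $\widehat{f}$ on $F$, both terms on the right vanish, forcing $f=0$. So it suffices to prove the quantitative bound. Let $P_E$ denote multiplication by $\chi_E$ and set $Q_F f := (\chi_F\widehat{f})^{\vee}$; both are orthogonal projections on $L^2(\mathbb{R}^d)$. A standard Parseval computation (writing $\|Q_F f\|^2 = \|f\|^2 - \|\widehat{f}\|_{L^2(F^c)}^2$ and bounding $\|P_{E^c} Q_F f\|$ by $\|f\|_{L^2(E^c)} + \|\widehat{f}\|_{L^2(F^c)}$) reduces the theorem to an operator-norm bound of the form
\begin{align*}
\|P_E Q_F\|_{L^2 \to L^2}^2 \leq 1 - c_d e^{-C_d \Phi(E,F)},
\end{align*}
where $\Phi(E,F) = \min\{m(E)m(F),\, m(E)^{1/d} w(F),\, m(F)^{1/d} w(E)\}$. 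This is a quantitative refinement of the Amrein--Berthier theorem.

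For the term $\Phi = m(E)m(F)$ in dimension $d=1$, the bound rests on Tur\'an's lemma: for any exponential polynomial $p(\xi) = \sum_{j=1}^{n} a_j e^{i\lambda_j \xi}$ and interval $I \supseteq E$, $\|p\|_{L^\infty(I)} \leq (C|I|/m(E))^{n} \|p\|_{L^\infty(E)}$. Approximating elements of $Q_F L^2$ on a fine grid by exponential polynomials with $O(m(F))$ frequencies yields the one-dimensional version of the operator bound, and tensorization then gives the $d$-dimensional product case at the cost of a dimension-dependent constant. For the mean-width terms $m(E)^{1/d} w(F)$ and $m(F)^{1/d} w(E)$, the idea is to foliate $\mathbb{R}^d$ by parallel lines in a direction along which $F$ (respectively $E$) has width comparable to $w(F)$ (resp.\ $w(E)$), apply the one-dimensional estimate on each fibre to the partial Fourier transform in that direction, and integrate via Fubini; a pigeonhole over coordinate directions of a John-type decomposition of $F$ produces the geometric factor $m(E)^{1/d}$ (resp.\ $m(F)^{1/d}$).

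The hard part will be twofold. First, establishing Tur\'an's lemma with the correct exponential dependence in $n$ is already delicate and requires a careful interpolation argument on complex exponentials. Second, making the foliation-and-covering argument quantitative enough to recover $w(F)$ rather than $m(F)^{1/d}$ demands a careful tube decomposition balancing the number of tubes against the width per tube. The dimension dependence of $C_d$ appears unavoidable, entering through tensorization and through the pigeonhole over directions.
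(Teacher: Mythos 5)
The paper offers no proof of this theorem: it is quoted from Nazarov and Jaming purely as background motivation for the finite-dimensional, p-adic and non-Archimedean results that follow, so there is no internal argument to compare yours against. Judged on its own terms, your proposal correctly identifies the architecture of the known proofs. The reduction to a quantitative operator bound $\|P_EQ_F\|^2\leq 1-c_de^{-C_d\Phi(E,F)}$ is the standard Amrein--Berthier/Havin--J\"oricke step, and your bookkeeping there (splitting $Q_Ff$ over $E$ and $E^c$, using that $Q_F$ is a projection) is sound.

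Everything after that, however, is a statement of intent rather than a proof, and the deferred steps are the entire mathematical content of the theorem. The claim that one can approximate elements of $Q_FL^2$ ``on a fine grid by exponential polynomials with $O(m(F))$ frequencies'' and then invoke Tur\'an's lemma does not work as described: a set $F$ of finite measure may be scattered over arbitrarily long frequency intervals, functions with spectrum in $F$ admit no a priori representation as exponential sums with $\sim m(F)$ terms, and bridging exactly this gap is the hard core of Nazarov's argument (a sharpened Tur\'an-type estimate combined with a careful decomposition of $F$ and an iteration argument), not a routine discretization. Likewise, the higher-dimensional step is only gestured at: tensorization covers product sets, not general $E$ and $F$, and the foliation/tube decomposition needed for the mean-width terms is precisely what you flag as ``the hard part'' without supplying it. As a roadmap to the literature the outline is reasonable; as a proof it has an essential gap at its center.
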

In 2011, Ghobber and Jaming derived the following finite dimensional version of 
Theorem \ref{JN}.  Given a subset  $M\subseteq \{1, \dots, n\}$, the number of elements in $M$ is denoted by $o(M)$. Complement of $M\subseteq \{1, \dots, n\}$ is denoted by $M^c$. 
\begin{theorem}\cite{GHOBBERJAMING}\label{GJ} (\textbf{Ghobber-Jaming  Uncertainty Principle})
Let 	$\{\tau_j\}_{j=1}^n$ and $\{\omega_j\}_{j=1}^n$ be orthonormal bases for   the Hilbert space $\mathbb{C}^n$.  If $M,N\subseteq \{1, \dots, n\}$ are such that 
\begin{align*}
o(M)o(N)< \frac{1}{\displaystyle \max_{1\leq j, k\leq n}|\langle\tau_j, \omega_k\rangle |^2},
\end{align*}
then for all $h \in \mathbb{C}^n$, 
\begin{align*}
	\|h\|\leq \left(1+\frac{1}{1-\sqrt{o(M)o(N)}\displaystyle\max_{1\leq j,k\leq n}|\langle\tau_j, \omega_k\rangle |}\right)\left[\left(\sum_{j\in M^c}|\langle h, \tau_j\rangle |^2\right)^\frac{1}{2}+\left(\sum_{k\in N^c}|\langle h, \omega_k\rangle |^2\right)^\frac{1}{2}\right].
\end{align*}
In particular, if $h$ is supported on $M$ in the expansion using basis $\{\tau_j\}_{j=1}^n$ and $h$ is supported on $N$ in the expansion using basis $\{\omega_j\}_{j=1}^n$, then $h=0$. 
\end{theorem}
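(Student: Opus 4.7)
The plan is to reduce the inequality to an operator-norm bound on the ``cross-projection'' $P_M Q_N$, where $P_M$ denotes the orthogonal projection onto $\mathrm{span}\{\tau_j : j\in M\}$ and $Q_N$ the orthogonal projection onto $\mathrm{span}\{\omega_k : k\in N\}$, and then to invert $I - P_M Q_N$ via Neumann series. Setting $C := \max_{1\le j,k\le n}|\langle\tau_j,\omega_k\rangle|$ and $d := \sqrt{o(M)o(N)}\,C$, the hypothesis $o(M)o(N) < 1/C^2$ is exactly the statement $d < 1$.

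The first step is the key estimate $\|P_M Q_N\|_{\mathrm{op}} \le d$. Since $P_M Q_N$ factors through the finite-dimensional space $\mathrm{range}(Q_N)$ with image in $\mathrm{range}(P_M)$, its nontrivial part is represented by the $o(M)\times o(N)$ matrix $(\langle\omega_k,\tau_j\rangle)_{j\in M,\,k\in N}$. Bounding operator norm by Hilbert--Schmidt norm gives
\begin{align*}
\|P_M Q_N\|_{\mathrm{op}}^2 \;\le\; \sum_{j\in M}\sum_{k\in N}|\langle\omega_k,\tau_j\rangle|^2 \;\le\; o(M)\,o(N)\,C^2 \;=\; d^2.
\end{align*}

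The second step uses $d<1$ to invert $I - P_M Q_N$ by Neumann series, so that $\|(I - P_M Q_N)^{-1}\|_{\mathrm{op}} \le 1/(1-d)$, and hence
\begin{align*}
\|h\| \;\le\; \frac{1}{1-d}\,\|(I - P_M Q_N)h\|.
\end{align*}
The third step is an algebraic rewrite: from $h = P_M h + (I-P_M)h$ and $P_M h = P_M Q_N h + P_M(I-Q_N)h$ one obtains $h - P_M Q_N h = (I-P_M)h + P_M(I-Q_N)h$, so by the triangle inequality (and $\|P_M\|_{\mathrm{op}}\le 1$)
\begin{align*}
\|(I - P_M Q_N)h\| \;\le\; \|(I-P_M)h\| + \|(I-Q_N)h\| \;=\; \Bigl(\textstyle\sum_{j\in M^c}|\langle h,\tau_j\rangle|^2\Bigr)^{1/2} + \Bigl(\textstyle\sum_{k\in N^c}|\langle h,\omega_k\rangle|^2\Bigr)^{1/2}.
\end{align*}

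Combining Steps~2 and~3 yields $\|h\|\le\frac{1}{1-d}\bigl(\|(I-P_M)h\| + \|(I-Q_N)h\|\bigr)$, which is slightly sharper than (hence implies) the stated inequality with constant $1 + 1/(1-d)$; the ``in particular'' assertion is the special case where both tails vanish. The main obstacle is Step~1, where one must squeeze out the factor $\sqrt{o(M)o(N)}$ in front of $C$: naive row- or column-wise estimates would give only $o(M)$ or $o(N)$, and it is the Hilbert--Schmidt/Frobenius trick that produces the symmetric product whose smallness is guaranteed by the hypothesis. Everything else is standard use of Neumann series and the triangle inequality.
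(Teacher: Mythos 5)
Your argument is correct, but note that the paper does not actually prove Theorem \ref{GJ}: it is quoted from Ghobber--Jaming, and the only proofs given in the paper are of the p-adic and non-Archimedean analogues. Comparing with those, your route is genuinely different. The paper's method (mirroring the original Archimedean proof) never inverts an operator: it bounds $\|P_NVP_M\|$ entrywise, proves the lower bound $\|y\|_{N^c,\omega}\geq (1-\|P_NVP_M\|)\|y\|$ for vectors $y$ supported on $M$ via the reverse triangle inequality applied to $Vy-P_NVy$, and then splits a general $x$ as $P_Mx+P_{M^c}x$; it is this final split that produces the constant $1+\tfrac{1}{1-d}$ in Theorem \ref{GJ}. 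You instead invert $I-P_MQ_N$ by a Neumann series and use the identity $(I-P_MQ_N)h=(I-P_M)h+P_M(I-Q_N)h$, which collapses the two estimates into one and yields the sharper constant $\tfrac{1}{1-d}$; your Hilbert--Schmidt bound $\|P_MQ_N\|\leq\sqrt{o(M)o(N)}\,C$ plays exactly the role of the paper's estimate (\ref{PVP}), except that in the ultrametric setting the cardinality factor disappears because sums are replaced by maxima. Both steps of yours are verified correctly (the HS computation over the basis $\{\omega_k\}_{k\in N}$ gives $\sum_{k\in N}\sum_{j\in M}|\langle\omega_k,\tau_j\rangle|^2\leq o(M)o(N)C^2$, and the algebraic rewrite plus $\|P_M\|\leq 1$ is sound), so your proof establishes a slightly stronger inequality than the one stated. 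What the paper's approach buys in exchange is that it transfers verbatim to the non-Archimedean setting, where the Neumann series is unnecessary (and the subtraction trick $\|Vy\|-\|P_NVy\|$ already suffices because of the strong triangle inequality); what yours buys is the better constant and a one-line treatment of general $h$.
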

Recently, Theorem \ref{GJ} has been extended to Banach spaces using the notion of p-orthonormal bases.
\begin{theorem} \label{K} \cite{KRISHNA} (\textbf{Functional Ghobber-Jaming  Uncertainty Principle})
	Let $(\{f_j\}_{j=1}^n, \{\tau_j\}_{j=1}^n)$	and $(\{g_k\}_{k=1}^n, \{\omega_k\}_{k=1}^n)$ be p-orthonormal bases  for a finite dimensional Banach space $\mathcal{X}$.  If $M,N\subseteq \{1, \dots, n\}$ are such that 
	\begin{align*}
		o(M)^\frac{1}{q}o(N)^\frac{1}{p}< \frac{1}{\displaystyle \max_{1\leq j,k\leq n}|g_k(\tau_j) |},
	\end{align*}
	then for all $x \in \mathcal{X}$, 	
	\begin{align*}
		\|x\|\leq \left(1+\frac{1}{1-o(M)^\frac{1}{q}o(N)^\frac{1}{p}\displaystyle\max_{1\leq j,k\leq n}|g_k(\tau_j)|}\right)\left[\left(\sum_{j\in M^c}|f_j(x)|^p\right)^\frac{1}{p}+\left(\sum_{k\in N^c}|g_k(x) |^p\right)^\frac{1}{p}\right].
	\end{align*}
	In particular, if $x$ is supported on $M$ in the expansion using basis $\{\tau_j\}_{j=1}^n$ and $x$ is supported on $N$ in the expansion using basis $\{\omega_k\}_{k=1}^n$, then $x=0$. 
\end{theorem}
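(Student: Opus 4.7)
The plan is to adapt the Hilbert-space argument of Theorem \ref{GJ} to the Banach setting, replacing Cauchy--Schwarz by H\"older's inequality and using the $\ell^p$-structure of the p-orthonormal bases in place of the $\ell^2$-structure. Define the two coordinate projections
\[
P_M x \,:=\, \sum_{j \in M} f_j(x)\,\tau_j, \qquad Q_N x \,:=\, \sum_{k \in N} g_k(x)\,\omega_k.
\]
By the p-orthonormality axioms $f_i(\tau_j) = \delta_{ij}$ and $g_l(\omega_k) = \delta_{lk}$, both maps are idempotent; and the identity $\|y\|^p = \sum_j |f_j(y)|^p = \sum_k |g_k(y)|^p$ gives first that $P_M, Q_N$ are contractions, and second the clean identifications $\|(I-P_M)x\| = \bigl(\sum_{j \in M^c}|f_j(x)|^p\bigr)^{1/p}$ and $\|(I-Q_N)x\| = \bigl(\sum_{k \in N^c}|g_k(x)|^p\bigr)^{1/p}$. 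Call these two quantities $A$ and $B$; they are exactly the two terms whose sum must dominate $\|x\|$.

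The heart of the proof is an operator-norm bound on the composition $Q_N P_M$, namely
\[
\|Q_N P_M x\| \,\leq\, c\,\|x\|, \qquad c \,:=\, o(M)^{1/q}\,o(N)^{1/p}\,\max_{1 \leq j,k \leq n}|g_k(\tau_j)|.
\]
Expanding $Q_N P_M x = \sum_{k \in N}\bigl(\sum_{j \in M} f_j(x)\,g_k(\tau_j)\bigr)\omega_k$ and computing its $p$-norm via the $(\{g_k\},\{\omega_k\})$ basis gives $\|Q_N P_M x\|^p = \sum_{k \in N}\bigl|\sum_{j \in M} f_j(x)\,g_k(\tau_j)\bigr|^p$. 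I then apply H\"older with conjugate exponents $p,q$ to the inner sum, bound the $\ell^q$-factor by $o(M)^{1/q}\max_j|g_k(\tau_j)|$, sum over $k \in N$, and use the trivial $\ell^p$--$\ell^\infty$ inequality together with $\sum_{k \in N}1 = o(N)$ to extract an $o(N)^{1/p}$ factor. Taking the order $Q_N P_M$ rather than $P_M Q_N$ is essential: it produces the matrix entries $g_k(\tau_j)$ and the exponent pair $(1/q, 1/p)$ on $(o(M), o(N))$ exactly as in the statement.

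With the key bound in hand, I close the argument by two triangle inequalities. Applying $Q_N$ to $x = P_M x + (I-P_M)x$ and using $\|Q_N\| \leq 1$ yields
\[
\|Q_N x\| \,\leq\, \|Q_N P_M x\| + \|Q_N(I-P_M)x\| \,\leq\, c\,\|x\| + A.
\]
A second triangle step $\|x\| \leq \|Q_N x\| + \|(I-Q_N)x\| = \|Q_N x\| + B$ then gives $\|x\| \leq c\,\|x\| + A + B$; since $c < 1$ by hypothesis, rearrangement produces a bound of shape $(1-c)^{-1}(A+B)$, and inserting an extra passage through the decomposition $\|x\| \leq \|P_M x\| + A$ before invoking the key estimate produces the slightly looser factor $1+(1-c)^{-1}$ quoted. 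The ``in particular'' conclusion follows immediately: if $x$ is supported on $M$ in the $\{\tau_j\}$ expansion and on $N$ in the $\{\omega_k\}$ expansion, then $A = B = 0$ and the inequality forces $x = 0$. The only nonroutine step is the H\"older estimate of paragraph two; everything else is organisational.
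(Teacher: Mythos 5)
Your argument is correct, and it does not follow the route the paper takes for results of this type. Note first that the paper states Theorem \ref{K} only as a quoted result from \cite{KRISHNA} and supplies no proof of it, so the natural comparison is with the paper's proofs of its p-adic and non-Archimedean analogues. Those proofs are organised differently from yours: every projection $P_S$ is built from the first basis $\{\tau_j\}$ alone, the second basis enters only through the transfer isometry $V:x\mapsto\sum_{k}g_k(x)\tau_k$ (so that $\|P_SVx\|$ recovers the $\omega$-coordinates of $x$ over $S$), the key operator estimate is on $\|P_NVP_M\|$ (the bound (\ref{FPVP}) in the non-Archimedean case), and the final chain starts from the decomposition $x=P_{M^c}x+P_Mx$, which is precisely what produces the constant $1+(1-c)^{-1}$. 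You instead use two projections $P_M$ and $Q_N$, one adapted to each basis, avoid the operator $V$ altogether, and close with the two triangle inequalities $\|Q_Nx\|\leq\|Q_NP_Mx\|+\|Q_N(I-P_M)x\|$ and $\|x\|\leq\|Q_Nx\|+\|(I-Q_N)x\|$. Your H\"older computation giving $\|Q_NP_M\|\leq o(M)^{1/q}o(N)^{1/p}\max_{j,k}|g_k(\tau_j)|=c$ is the exact $\ell^p$-counterpart of the paper's key bound and is carried out correctly; the hypotheses on $M,N$ guarantee $c<1$. The net effect of your arrangement is the sharper constant $(1-c)^{-1}$ in place of the stated $1+(1-c)^{-1}$; since the former is smaller, the stated inequality follows a fortiori, and your closing remark that an extra pass through $\|x\|\leq\|P_Mx\|+A$ recovers the quoted looser constant is accurate but not needed. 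The ``in particular'' clause is handled correctly ($A=B=0$ forces $\|x\|\leq 0$). The one hypothesis you should make explicit rather than assert in passing is the norm identity $\|y\|^p=\sum_{j}|f_j(y)|^p=\sum_{k}|g_k(y)|^p$; it is exactly the defining property of p-orthonormal bases in \cite{KRISHNA}, and it is what legitimises both the contractivity of $P_M,Q_N$ and your identifications of $A$ and $B$ with the two sums in the conclusion.
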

 It is  reasonable to ask whether there  are p-adic (non-Archimedean) versions of Theorems \ref{GJ} and \ref{K}? We are going to answer this question in the paper.

\section{p-adic Ghobber-Jaming Uncertainty Principle}

We begin by recalling the notion of p-adic Hilbert space.  Kalisch made important study of p-adic Hilbert spaces in 1947 by imposing certain conditions on base field \cite{KALISCH}. We are going to use much more general notion which recently attracted lot of attention, see \cite{KRISHNA3, KRISHNA4, ANIELLOMANCINIPARISI, ANIELLOMANCINIPARISI2}.
\begin{definition}\cite{KRISHNA3, KRISHNA4, ANIELLOMANCINIPARISI, ANIELLOMANCINIPARISI2} \label{PADICDEF}
	Let $\mathbb{K}$ be a non-Archimedean  valued field with valuation $|\cdot|$ and $\mathcal{X}$ be a non-Archimedean Banach space with norm $\|\cdot\|$ over $\mathbb{K}$. We say that $\mathcal{X}$ is a \textbf{p-adic Hilbert space} if there is a map (called as p-adic inner product) $\langle \cdot, \cdot \rangle: \mathcal{X} \times \mathcal{X} \to \mathbb{K}$ satisfying following.
	\begin{enumerate}[\upshape (i)]
		\item If $x \in \mathcal{X}$ is such that $\langle x,y \rangle =0$ for all $y \in \mathcal{X}$, then $x=0$.
		\item $\langle x, y \rangle =\langle y, x \rangle$ for all $x,y \in \mathcal{X}$.
		\item $\langle \alpha x+y,z \rangle =\alpha \langle x,  y \rangle+\langle y,z\rangle$ for all  $\alpha  \in \mathbb{K}$, for all $x,y,z \in \mathcal{X}$.
		\item $|\langle x, y \rangle |\leq \|x\|\|y\|$ for all $x,y \in \mathcal{X}$.
	\end{enumerate}
\end{definition}
The following is the  standard example of a p-adic Hilbert space.
\begin{example}\cite{KALISCH} \label{E}
	Let $p$ be a prime. For $d \in \mathbb{N}$, let $\mathbb{Q}_p^d$ be the standard p-adic Hilbert space equipped with the inner product 
	\begin{align*}
		\langle (a_j)_{j=1}^d,(b_j)_{j=1}^d\rangle := \sum_{j=1}^da_jb_j,  \quad \forall (a_j)_{j=1}^d,(b_j)_{j=1}^d \in \mathbb{Q}_p^d
	\end{align*}
	and the norm 
	\begin{align*}
		\|(x_j)_{j=1}^d\|:= \max_{1\leq j \leq d}|x_j|, \quad \forall (x_j)_{j=1}^d\in 	\mathbb{Q}_p^d.
	\end{align*}
\end{example}
Note that we can replace $\mathbb{Q}_p$ in Example \ref{E} by any non-Archimedean valued field $\mathbb{K}$. 
In the paper, we use the following notion of orthonormal basis for p-adic Hilbert spaces.
\begin{definition}\label{PONB}
	Let $\mathcal{X}$  be a  p-adic Hilbert  space over $\mathbb{K}$.   A basis  $\{\tau_j\}_{j=1}^n$ for  $\mathcal{X}$ is said to be an  orthonormal basis for    $\mathcal{X}$   if  the following conditions hold.	
		\begin{enumerate}[\upshape(i)]
		\item $\|\tau_j\|\leq 1$ for all $1\leq j\leq n$.
		\item $	\langle \tau_j, \tau_k\rangle =\delta_{j,k}$ for all $ 1\leq j, k \leq n$.
	\end{enumerate}
\end{definition}
 Note that given an orthonormal basis $\{\tau_j\}_{j=1}^n$ for  $\mathcal{X}$,  condition (ii) in Definition \ref{PONB} says that 
 \begin{align*}
 	1=|\langle \tau_j, \tau_j \rangle |\leq \|\tau_j\|^2, \quad \forall 1\leq j \leq n.
 \end{align*}
 By using condition (i) in Definition \ref{PONB} we then get $\|\tau_j\|= 1$ for all $1\leq j\leq n$.
Given an orthonormal basis $\{\tau_j\}_{j=1}^n$ for $\mathcal{X}$, we have following.
	\begin{enumerate}[\upshape(i)]
	\item For every $x \in \mathcal{X}$, 
	\begin{align*}
	\text{(p-adic Fourier expansion)}\quad \quad \quad 	x=\sum_{j=1}^{n}\langle x, \tau_j\rangle  \tau_j.
	\end{align*}
\item For every $x \in \mathcal{X}$, 
	\begin{align*}
	\text{(p-adic Parseval formula for norm)}\quad \quad \quad	\|x\|=\displaystyle \max_{1\leq j\leq n}|\langle x, \tau_j\rangle|.
\end{align*}
\item For every $x, y \in \mathcal{X}$, 
\begin{align*}
	\text{(p-adic Parseval formula for inner product)}\quad \quad \quad	\langle x, y \rangle=\sum_{j=1}^{n}\langle x, \tau_j\rangle\langle \tau_j, y\rangle.
\end{align*}
	\item For every $(a_j)_{j=1}^n \in \mathbb{K}^n$, 
	\begin{align*}
		\left\|\sum_{j=1}^na_j\tau_j \right\|=\displaystyle \max_{1\leq j\leq n}|a_j|.
	\end{align*}
\end{enumerate}
Let $\mathcal{X}$ be a p-adic Hilbert space. Recall that a linear operator $T:\mathcal{X} \to \mathcal{X}$ is said to be an isometry if $\|Tx\|=\|x\|$ for all $x \in \mathcal{X}$. An invertible isometry $T:\mathcal{X} \to \mathcal{X}$ is said to be unitary if $\langle Tx, Ty\rangle =\langle x, y \rangle $ for all $x, y \in \mathcal{X}$. Note that unlike real or complex Hilbert spaces, neither 
\begin{align*}
\langle Tx, Ty\rangle =\langle x, y \rangle, ~ \forall x, y \in \mathcal{X}	\quad \Rightarrow \|Tx\|=\|x\|, ~ \forall x  \in \mathcal{X}
\end{align*}
nor 
\begin{align*}
\|Tx\|=\|x\|, ~ \forall x  \in \mathcal{X}	\quad \Rightarrow \langle Tx, Ty\rangle =\langle x, y \rangle, ~ \forall x, y \in \mathcal{X}.
\end{align*}
In the next result we characterize all orthonormal bases for a given p-adic Hilbert space.
\begin{theorem}
	Let $\{\tau_j\}_{j=1}^n$ be an orthonormal basis for $\mathcal{X}$. Then a collection 	$\{\omega_j\}_{j=1}^n$ in $\mathcal{X}$ is an orthonormal basis for $\mathcal{X}$ if and only if there is a unitary $V:\mathcal{X} \to \mathcal{X}$ such that 
	\begin{align*}
		 \omega_j=V\tau_j, \quad \forall 1\leq j \leq n.
	\end{align*}
\end{theorem}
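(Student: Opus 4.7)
The plan is to prove both directions directly from the properties of orthonormal bases listed above, with the key algebraic input being the formulas $\|\sum_j a_j \tau_j\| = \max_j |a_j|$ and $\langle \sum_j a_j \tau_j, \sum_k b_k \tau_k\rangle = \sum_j a_j b_j$, which together make the coordinate representation with respect to an orthonormal basis an isometric isomorphism onto $\mathbb{K}^n$ equipped with its canonical p-adic inner product structure.

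For the \emph{if} direction, I would assume $V:\mathcal{X}\to\mathcal{X}$ is unitary and set $\omega_j = V\tau_j$. Then $\|\omega_j\| = \|V\tau_j\| = \|\tau_j\| \le 1$ takes care of the norm condition, while $\langle \omega_j,\omega_k\rangle = \langle V\tau_j,V\tau_k\rangle = \langle \tau_j,\tau_k\rangle = \delta_{j,k}$ gives the orthonormality. Finally, since $V$ is invertible and $\{\tau_j\}_{j=1}^n$ is a basis, its image $\{\omega_j\}_{j=1}^n$ is again a basis of $\mathcal{X}$.

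For the \emph{only if} direction, given an orthonormal basis $\{\omega_j\}_{j=1}^n$, I would define $V$ by declaring $V\tau_j := \omega_j$ and extending linearly, which is unambiguous because $\{\tau_j\}$ is a basis. Any $x \in \mathcal{X}$ has a unique expansion $x=\sum_{j=1}^n a_j\tau_j$ with $a_j=\langle x,\tau_j\rangle$, and then $Vx=\sum_{j=1}^n a_j\omega_j$. The identity $\|\sum_j a_j\tau_j\|=\max_j|a_j|=\|\sum_j a_j\omega_j\|$ (applied to both bases) shows $V$ is an isometry, and invertibility follows because $V$ sends a basis to a basis. For the inner product,
\begin{align*}
\langle Vx,Vy\rangle = \Big\langle \sum_j a_j\omega_j, \sum_k b_k\omega_k\Big\rangle = \sum_{j}a_jb_j = \Big\langle \sum_j a_j\tau_j, \sum_k b_k\tau_k\Big\rangle = \langle x,y\rangle,
\end{align*}
using bilinearity (clause (iii) of Definition \ref{PADICDEF}) and orthonormality on both sides.

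I do not expect a real obstacle; the remark preceding the theorem warns that in the p-adic setting isometry and inner product preservation are logically independent, so the only subtle point is that one must verify both conditions separately when constructing $V$. The coordinate formulas supplied after the definition of orthonormal basis make both verifications essentially immediate, so there is no hidden step requiring a separate argument.
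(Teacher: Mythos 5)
Your proposal is correct and follows essentially the same route as the paper: the map $V$ you define by $V\tau_j=\omega_j$ extended linearly is exactly the paper's $Vx=\sum_{j}\langle x,\tau_j\rangle\omega_j$, and both arguments verify isometry via $\|\sum_j a_j\tau_j\|=\max_j|a_j|$ and inner-product preservation via bilinear expansion against orthonormality (the paper phrases the latter through its p-adic Parseval formula, which is the same computation). The converse direction is identical to the paper's.
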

\begin{proof}
	$(\Rightarrow)$ Define $V:\mathcal{X}\ni x \mapsto \sum_{j=1}^{n}\langle x, \tau_j \rangle \omega_j \in \mathcal{X}$. Since $\{\omega_j\}_{j=1}^n$ is a basis for $\mathcal{X}$, $V$ is invertible with inverse $V^{-1}:\mathcal{X}\ni x \mapsto \sum_{j=1}^{n}\langle x, \omega_j \rangle\tau_j \in \mathcal{X}$. For $x \in \mathcal{X}$, 
	
	\begin{align*}
		\|Vx\|=\left\| \sum_{j=1}^{n}\langle x, \tau_j \rangle\omega_j \right\|=\displaystyle \max_{1\leq j\leq n}|\langle x, \tau_j\rangle|=\|x\|.
	\end{align*}
	Therefore $V$ is an isometry. For $x, y \in \mathcal{X}$, 
	\begin{align*}
		\langle Vx, Vy\rangle &=\left \langle \sum_{j=1}^{n}\langle x, \tau_j \rangle\omega_j, \sum_{k=1}^{n}\langle y, \tau_k \rangle\omega_k\right \rangle=\sum_{j=1}^{n}\langle x, \tau_j \rangle \langle \tau_j, y \rangle =\langle x, y \rangle.
	\end{align*}
Therefore $V$ is a unitary. We clearly have $\omega_j=V\tau_j,  \forall 1\leq j \leq n.$ \\
	$(\Leftarrow)$ Since $V$ is invertible, $\{\omega_j\}_{j=1}^n$ is a basis for $\mathcal{X}$.   Since $V$ is an isometry, we have $\|\omega_j\|=\|\tau_j\|\leq 1$ for all $1\leq j \leq n$. Since $V$ is unitary,  we have 
	\begin{align*}
			\langle \omega_j, \omega_k\rangle=\langle V\tau_j, V\tau_k\rangle=\langle \tau_j, \tau_k\rangle =\delta_{j,k}, \quad \forall 1\leq j,k \leq n.
	\end{align*}
\end{proof}
Now we derive p-adic version of Theorem  \ref{GJ}.
\begin{theorem}(\textbf{p-adic Ghobber-Jaming  Uncertainty Principle})
	Let $\mathcal{X}$  be a finite dimensional  p-adic Hilbert  space over $\mathbb{K}$. 	Let $\{\tau_j\}_{j=1}^n$ and $\{\omega_k\}_{k=1}^n$ be  two orthonormal bases  for  $\mathcal{X}$. If $M,N\subseteq \{1, \dots, n\}$ are such that 
	\begin{align*}
	\displaystyle \max_{j \in M, k \in N}|\langle \tau_j, \omega_k \rangle|<1,
	\end{align*}
then for all $x \in \mathcal{X}$, 	
\begin{align}\label{PGJ}
		\|x\|\leq  \left(\frac{1}{1-\displaystyle \max_{j \in M, k \in N}|\langle \tau_j, \omega_k \rangle|}\right)\max\left\{\displaystyle \max_{j \in M^c}|\langle x, \tau_j\rangle |, \displaystyle \max_{k \in N^c}|\langle x, \omega_k\rangle |\right\}.
\end{align}
In particular, if $x$ is supported on $M$ in the expansion using basis $\{\tau_j\}_{j=1}^n$ and $x$ is supported on $N$ in the expansion using basis $\{\omega_j\}_{j=1}^n$, then $x=0$. 
\end{theorem}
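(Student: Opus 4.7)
The plan is to expand an arbitrary $x\in\mathcal X$ in the basis $\{\omega_k\}_{k=1}^n$, compute $\langle x,\tau_j\rangle$ for $j\in M$, and then exploit the ultrametric inequality, which replaces sums by maxima and makes the p-adic version considerably cleaner than the Archimedean one.

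First I would fix $j\in M$ and apply the p-adic Parseval formula for inner products to write
$$\langle x,\tau_j\rangle=\sum_{k=1}^n\langle x,\omega_k\rangle\langle\omega_k,\tau_j\rangle.$$
Splitting this sum into $k\in N$ and $k\in N^c$ and applying the strong triangle inequality yields
$$|\langle x,\tau_j\rangle|\le\max\left\{\max_{k\in N}|\langle x,\omega_k\rangle||\langle\omega_k,\tau_j\rangle|,\ \max_{k\in N^c}|\langle x,\omega_k\rangle||\langle\omega_k,\tau_j\rangle|\right\}.$$
For $k\in N$ the hypothesis bounds $|\langle\omega_k,\tau_j\rangle|\le c$, where $c:=\max_{j\in M,k\in N}|\langle\tau_j,\omega_k\rangle|<1$, while Parseval for the norm bounds $|\langle x,\omega_k\rangle|\le\|x\|$. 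For $k\in N^c$ the Cauchy-Schwarz-type axiom (iv) of Definition \ref{PADICDEF} gives $|\langle\omega_k,\tau_j\rangle|\le\|\omega_k\|\|\tau_j\|\le1$. Hence for every $j\in M$,
$$|\langle x,\tau_j\rangle|\le\max\{c\|x\|,\,B\},\qquad B:=\max_{k\in N^c}|\langle x,\omega_k\rangle|.$$

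Next I would invoke the p-adic Parseval formula for the norm, $\|x\|=\max_{1\le j\le n}|\langle x,\tau_j\rangle|$, and split the index set into $M$ and $M^c$, using the bound above on $M$ and the definition $A:=\max_{j\in M^c}|\langle x,\tau_j\rangle|$ on $M^c$, to obtain
$$\|x\|\le\max\{c\|x\|,A,B\}.$$
A two-case argument then closes everything: if the right-hand maximum equals $c\|x\|$, then $(1-c)\|x\|\le0$ forces $\|x\|=0$; otherwise $\|x\|\le\max\{A,B\}$. In either case $(1-c)\|x\|\le\max\{A,B\}$, which is exactly inequality (\ref{PGJ}). The ``in particular'' assertion is immediate: support on $M$ in the $\{\tau_j\}$-expansion kills $A$, and support on $N$ in the $\{\omega_k\}$-expansion kills $B$, so (\ref{PGJ}) forces $x=0$.

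The calculation is short and there is no real obstacle; the conceptual point I would emphasise is that the strong triangle inequality does all the heavy lifting, absorbing the cardinality factors $o(M),o(N)$ and the additive ``$1+$'' that appear in Theorems \ref{GJ} and \ref{K}, and leaving only the clean geometric-series style factor $(1-c)^{-1}$.
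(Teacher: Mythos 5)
Your proof is correct, and it takes a genuinely different route from the one in the paper. The paper follows the operator-theoretic template of the Archimedean versions: it introduces the coordinate projections $P_S$, the invertible isometry $V:x\mapsto\sum_{k}\langle x,\omega_k\rangle\tau_k$, proves the operator-norm bound $\|P_NVP_M\|\leq c$, deduces the reverse inequality $\|y\|_{N^c,\omega}\geq(1-c)\|y\|$ for vectors $y$ supported on $M$, and then runs a chain of ultrametric estimates starting from the decomposition $x=P_Mx+P_{M^c}x$ (which requires an extra step to absorb the term $P_{N^c}VP_{M^c}x$). You instead expand each coefficient $\langle x,\tau_j\rangle$, $j\in M$, over the $\omega$-basis, split the sum over $N$ and $N^c$, and arrive at the self-referential bound $\|x\|\leq\max\{c\|x\|,A,B\}$, which the two-case ultrametric argument resolves immediately; every ingredient you use (Parseval for the inner product and the norm, symmetry of $\langle\cdot,\cdot\rangle$, the Cauchy--Schwarz axiom, multiplicativity of the valuation) is available in the paper, and the edge cases ($M$ or $M^c$ empty, $\|x\|=0$) cause no trouble. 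Your version is shorter and avoids operators and projections entirely; what the paper's formulation buys is a statement of the intermediate facts (the norm bound on $P_NVP_M$ and the lower bound for supported vectors) in the annihilating-pairs language that parallels Ghobber--Jaming and the functional version, at the cost of a longer final computation. Both arguments yield exactly the constant $(1-c)^{-1}$, so nothing is lost either way.
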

\begin{proof}
Given $S\subseteq \{1, \dots, n\}$, define 
\begin{align*}
	&P_Sx\coloneqq \sum_{j\in S}\langle x, \tau_j\rangle\tau_j, \quad \forall x \in \mathcal{X},\\
	&\|x\|_{S, \tau}\coloneqq \displaystyle \max_{j \in S}|\langle x, \tau_j\rangle|, \quad \forall x \in \mathcal{X},\\
	&\|x\|_{S, \omega}\coloneqq \displaystyle \max_{k \in S}|\langle x, \omega_k\rangle|, \quad \forall x \in \mathcal{X}.
\end{align*}
Then 
\begin{align*}
	\|P_Sx\|=\left\|\sum_{j\in S}\langle x, \tau_j \rangle \tau_j\right\|=\displaystyle \max_{j \in S}|\langle x, \tau_j\rangle|=\|x\|_{S, \tau}, \quad \forall x \in \mathcal{X}.
\end{align*}
Define $V:\mathcal{X}\ni x \mapsto \sum_{k=1}^{n}\langle x, \omega_k\rangle\tau_k \in \mathcal{X}$. Then $V$ is an invertible isometry. Using $V$ we have
\begin{align*}
	\|P_SVx\|&=\left\|\sum_{j\in S}\langle Vx, \tau_j\rangle \tau_j\right\|=\displaystyle \max_{j \in S}|\langle Vx, \tau_j\rangle|=\displaystyle \max_{j \in S}\left|\left\langle \sum_{k=1}^{n}\langle x, \omega_k\rangle\tau_k, \tau_j\right\rangle\right|\\
	&=\displaystyle \max_{j \in S}|\langle x, \omega_j\rangle|=\|x\|_{S, \omega}, \quad \forall x \in \mathcal{X}.
\end{align*}
For $x \in \mathcal{X}$, we  now find 

\begin{align*}
	\|P_NVP_Mx\|&=\left\|\sum_{k\in N}\langle VP_Mx, \tau_k\rangle \tau_k\right\|=\displaystyle \max_{k \in N}|\langle VP_Mx, \tau_k\rangle|\\
	&=\displaystyle \max_{k \in N}\left|\left\langle V\left(\sum_{j \in M}\langle x, \tau_j\rangle \tau_j\right), \tau_k\right\rangle\right|=\displaystyle \max_{k \in N}\left|\left\langle \sum_{j \in M}\langle x, \tau_j\rangle V\tau_j, \tau_k\right\rangle\right|\\
	&=\displaystyle \max_{k \in N}\left| \sum_{j \in M}\langle x, \tau_j\rangle \langle V\tau_j, \tau_k\rangle\right|=\displaystyle \max_{k \in N}\left| \sum_{j \in M}\langle x, \tau_j\rangle \left\langle \sum_{r=1}^{n}\langle \tau_j, \omega_r\rangle \tau_r, \tau_k\right\rangle\right|\\
	&=\displaystyle \max_{k \in N}\left| \sum_{j \in M}\langle x, \tau_j\rangle \left\langle \tau_j, \omega_k\right\rangle\right|\leq \displaystyle \max_{k \in N}\max_{j \in M}|\langle x, \tau_j\rangle \left\langle \tau_j, \omega_k\right\rangle|\\
	&\leq \left(\displaystyle \max_{j \in M, k \in N}|\langle \tau_j, \omega_k \rangle|\right) \max_{j \in M}|\langle x, \tau_j\rangle|\leq \left(\displaystyle \max_{j \in M, k \in N}|\langle \tau_j, \omega_k \rangle|\right)\|x\|.
\end{align*}
Therefore 
\begin{align}\label{PVP}
	\|P_NVP_M\|\leq \displaystyle \max_{j \in M, k \in N}|\langle \tau_j, \omega_k \rangle|.
\end{align}
Now let $y \in \mathcal{X}$ be such that $\{j \in \{1, \dots, n\}: \langle y, \tau_j \rangle \neq 0\} \subseteq M.$
Then 
\begin{align*}
\|P_NVy\|=\|P_NVP_My\|\leq \|P_NVP_M\|\|y\|	
\end{align*}
 and 
\begin{align*}
	\|y\|_{N^c, \omega}&=\|P_{N^c}Vy\|=\|Vy-P_NVy\|\\
	&\geq \|Vy\|-\|P_NVy\|=\|y\|-\|P_NVy\| \\
	&\geq \|y\|-\|P_NVP_M\|\|y\|.
\end{align*}
Therefore 
\begin{align}\label{RT}
	\|y\|_{N^c, \omega}\geq (1-\|P_NVP_M\|)\|y\|.
\end{align}
Let $x \in \mathcal{X}$.   Note that $P_Mx$ satisfies $	\{j \in \{1, \dots, n\}: \langle P_Mx, \tau_j\rangle \neq 0\} \subseteq M.$ Now using (\ref{PVP}) and  (\ref{RT}) we get 

\begin{align*}
	\|x\|&=\|P_{M^c}x+P_Mx\|\leq \max\{\|P_{M^c}x\|, \|P_Mx\|\}\\
	&\leq \max \left\{\|P_{M^c}x\|, \frac{1}{1-\|P_NVP_M\|}	\|P_Mx\|_{N^c, \omega}\right\} \\
	&= \max \left\{\|P_{M^c}x\|, \frac{1}{1-\|P_NVP_M\|}	\|P_{N^c}VP_Mx\|\right\} \\
	&=\max \left\{\|P_{M^c}x\|, \frac{1}{1-\|P_NVP_M\|}	\|P_{N^c}V(x-P_{M^c}x)\|\right\} \\
	&\leq \max \left\{\|P_{M^c}x\|, \frac{1}{1-\|P_NVP_M\|}	\|P_{N^c}Vx\|, \frac{1}{1-\|P_NVP_M\|}	\|P_{N^c}VP_{M^c}x\|\right\} \\
	&\leq \max \left\{\|P_{M^c}x\|, \frac{1}{1-\|P_NVP_M\|}	\|P_{N^c}Vx\|, \frac{1}{1-\|P_NVP_M\|}	\|P_{M^c}x\|\right\} \\
	&=\max \left\{\frac{1}{1-\|P_NVP_M\|}	\|P_{M^c}x\|, \frac{1}{1-\|P_NVP_M\|}	\|P_{N^c}Vx\|\right\} \\
	&=\left(\frac{1}{1-\|P_NVP_M\|}\right)\max \left\{\|P_{M^c}x\|, \|P_{N^c}Vx\|\right\} \\
 &=\left(\frac{1}{1-\|P_NVP_M\|}\right)\max \left\{\|x\|_{M^c, \tau}, \|x\|_{N^c, \omega}\right\} \\
 &=\left(\frac{1}{1-\|P_NVP_M\|}\right) \max\left\{\displaystyle \max_{j \in M^c}|\langle x, \tau_j\rangle |, \displaystyle \max_{k \in N^c}|\langle x, \omega_k\rangle |\right\}\\
 &\leq \left(\frac{1}{1-\displaystyle \max_{j \in M, k \in N}|\langle \tau_j, \omega_k \rangle|}\right)\max\left\{\displaystyle \max_{j \in M^c}|\langle x, \tau_j\rangle |, \displaystyle \max_{k \in N^c}|\langle x, \omega_k\rangle |\right\}.
\end{align*}
\end{proof}

\section{Non-archimedean  Ghobber-Jaming Uncertainty Principle}
We now want to derive Ghobber-Jaming uncertainty principle for non-Archimedean Banach spaces. For this, we introduce the following notion of orthonormal bases. 

\begin{definition}\label{FONB}
	Let $\mathcal{X}$  be a  finite dimensional non-Archimedean Banach  space over $\mathbb{K}$. Let $\{\tau_j\}_{j=1}^n$ be a basis for   $\mathcal{X}$ and 	let $\{f_j\}_{j=1}^n$ be the coordinate functionals associated with $\{\tau_j\}_{j=1}^n$. The pair $(\{f_j\}_{j=1}^n, \{\tau_j\}_{j=1}^n)$ is said to be an \textbf{orthonormal basis}  for $\mathcal{X}$ if  the following conditions hold.	
		\begin{enumerate}[\upshape(i)]
		\item $\|\tau_j\|\leq 1$ for all $1\leq j\leq n$.
		\item $\|f_j\|\leq 1$ for all $1\leq j\leq n$.
	\end{enumerate}
\end{definition}
Let $(\{f_j\}_{j=1}^n, \{\tau_j\}_{j=1}^n)$ be an orthonormal basis for $\mathcal{X}$. Definition \ref{FONB} then gives
\begin{align*}
	1=|f_j(\tau_j)|\leq \|f_j\|\|\tau_j\|\leq1, \quad \forall 1\leq j \leq n.
\end{align*}
Hence $\|f_j\|=\|\tau_j\|=1$ for all $1\leq j \leq n$.
\begin{example}
	Let $\mathbb{K}$ be any non-Archimedean valued field. Given $d \in \mathbb{N}$, we equip $\mathbb{K}^d$ with the norm 
	\begin{align*}
		\|(a_j)_{j=1}^d\|:= \max_{1\leq j \leq d}|a_j|, \quad \forall (a_j)_{j=1}^d\in 	\mathbb{K}^d.	
	\end{align*}
Then $\mathbb{K}^d$  is a $d$-dimensional non-Archimedean Banach  space over $\mathbb{K}$. Let $\{e_j\}_{j=1}^d$ be the  canonical basis for   $\mathbb{K}^d$ and $\{\zeta_j\}_{j=1}^d$ be the coordinate functionals associated with $\{e_j\}_{j=1}^d$. Then $\|\zeta_j\|= \|e_j\|=1$ for all $1\leq j\leq d$. So $(\{\zeta_j\}_{j=1}^d, \{e_j\}_{j=1}^d)$ is an  orthonormal basis  for $\mathbb{K}^d$.
\end{example}
Given an orthonormal basis $(\{f_j\}_{j=1}^n, \{\tau_j\}_{j=1}^n)$ for $\mathcal{X}$, we have following.
\begin{enumerate}[\upshape(i)]
	\item For every $x \in \mathcal{X}$, 
	\begin{align*}
		\text{(Non-Archimedean basis expansion)}\quad \quad \quad 	x=\sum_{j=1}^{n}f_j(x)  \tau_j.
	\end{align*}
	\item For every $x \in \mathcal{X}$, 
	\begin{align*}
		\text{(Non-Archimedean Parseval formula for norm)}\quad \quad \quad	\|x\|=\displaystyle \max_{1\leq j\leq n}|f_j(x)|.
	\end{align*}
\item For every linear functional $\phi: \mathcal{X}\to \mathbb{K}$, 
\begin{align*}
\phi=\sum_{j=1}^{n}\phi(\tau_j)f_j.	
\end{align*}
\item For every linear functional $\phi: \mathcal{X}\to \mathbb{K}$, 
\begin{align*}
	\|\phi\|=\displaystyle \max_{1\leq j\leq n}|\phi(\tau_j)|.
\end{align*}
	\item For every $(a_j)_{j=1}^n, (b_j)_{j=1}^n \in \mathbb{K}^n$, 
	\begin{align*}
		\left\|\sum_{j=1}^na_j\tau_j \right\|=\displaystyle \max_{1\leq j\leq n}|a_j| \quad \text{and} \quad 	\left\|\sum_{j=1}^nb_jf_j \right\|=\displaystyle \max_{1\leq j\leq n}|b_j|.
	\end{align*}
\end{enumerate}
In the next result we characterize all orthonormal bases for a given non-Archimedean Banach  space.
\begin{theorem}
	Let $(\{f_j\}_{j=1}^n, \{\tau_j\}_{j=1}^n)$ be an orthonormal basis for $\mathcal{X}$. Then a pair 	$(\{g_j\}_{j=1}^n, \{\omega_j\}_{j=1}^n)$ is an orthonormal basis for $\mathcal{X}$ if and only if there is an invertible linear isometry $V:\mathcal{X} \to \mathcal{X}$ such that 
	\begin{align*}
		g_j=f_jV^{-1}, ~ \omega_j=V\tau_j, \quad \forall 1\leq j \leq n.
	\end{align*}
\end{theorem}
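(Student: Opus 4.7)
The plan is to proceed by constructing, in the forward direction, the natural linear isometry that sends one basis to the other, and in the backward direction, to verify the two defining conditions of an orthonormal basis using the fact that the inverse of an invertible isometry is again an isometry.

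For the forward implication, I would define $V:\mathcal{X}\to\mathcal{X}$ to be the unique linear map satisfying $V\tau_j=\omega_j$ for all $1\leq j\leq n$. Since $\{\omega_j\}_{j=1}^n$ is a basis, $V$ is automatically invertible. To show that $V$ is an isometry, I invoke property (v) of the non-Archimedean Parseval formula: writing $x=\sum_{j=1}^n a_j\tau_j$ gives $\|x\|=\max_{j}|a_j|$, and since $Vx=\sum_{j=1}^n a_j\omega_j$, the same formula applied to $\{\omega_j\}_{j=1}^n$ yields $\|Vx\|=\max_{j}|a_j|=\|x\|$. To identify $g_j$ with $f_j\circ V^{-1}$, I use biorthogonality: by definition of the coordinate functionals, $g_j(\omega_k)=\delta_{j,k}$, while $(f_j V^{-1})(\omega_k)=f_j(\tau_k)=\delta_{j,k}$, so the two linear functionals coincide on the basis $\{\omega_k\}_{k=1}^n$ and hence are equal.

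For the reverse implication, I assume such a $V$ is given. Invertibility of $V$ immediately makes $\{\omega_j\}_{j=1}^n=\{V\tau_j\}_{j=1}^n$ a basis for $\mathcal{X}$, and the isometry condition gives $\|\omega_j\|=\|V\tau_j\|=\|\tau_j\|\leq 1$. For the functional side, I would first observe that $V^{-1}$ is itself an isometry: for $y\in\mathcal{X}$, setting $x=V^{-1}y$ yields $\|V^{-1}y\|=\|x\|=\|Vx\|=\|y\|$. Consequently $\|g_j\|=\|f_j V^{-1}\|\leq \|f_j\|\cdot\|V^{-1}\|=\|f_j\|\leq 1$. It remains to check that $\{g_j\}_{j=1}^n$ are indeed the coordinate functionals associated with $\{\omega_j\}_{j=1}^n$, which is immediate from $g_j(\omega_k)=f_j(V^{-1}V\tau_k)=f_j(\tau_k)=\delta_{j,k}$.

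No step looks genuinely difficult; the only point requiring care is the norm control on the coordinate functionals in the reverse direction, which would appear to be the main obstacle, but is handled cleanly by the observation that an invertible isometry has an isometric inverse, so that the submultiplicativity of the operator norm gives $\|f_j V^{-1}\|\leq 1$ for free.
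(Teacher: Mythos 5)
Your proposal is correct and follows essentially the same route as the paper: the same map $V$ (determined by $V\tau_j=\omega_j$, equivalently $Vx=\sum_j f_j(x)\omega_j$), the same Parseval argument for the isometry, and the same biorthogonality check $g_j(\omega_k)=\delta_{j,k}$ in both directions. The only cosmetic difference is in the reverse direction, where you bound $\|g_j\|$ via $\|f_jV^{-1}\|\leq\|f_j\|\,\|V^{-1}\|$ with $\|V^{-1}\|=1$, whereas the paper computes $\|g_j\|=\|f_j\|$ directly by a change of variable in the supremum; both are valid.
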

\begin{proof}
	$(\Rightarrow)$ Define $V:\mathcal{X}\ni x \mapsto \sum_{j=1}^{n}f_j(x)\omega_j \in \mathcal{X}$. Since $\{\omega_j\}_{j=1}^n$ is a basis for $\mathcal{X}$, $V$ is invertible with inverse $V^{-1}:\mathcal{X}\ni x \mapsto \sum_{j=1}^{n}g_j(x)\tau_j \in \mathcal{X}$. For $x \in \mathcal{X}$, 
	\begin{align*}
		\|Vx\|=\left\| \sum_{j=1}^{n}f_j(x)\omega_j \right\|=\max_{1\leq j\leq n}|f_j(x)|=\|x\|.
	\end{align*}
	Therefore $V$ is an  isometry. Note that we clearly have $\omega_j=V\tau_j,  \forall 1\leq j \leq n.$ Now let $1\leq j \leq n$. Then 
	\begin{align*}
		f_j(V^{-1}x)=f_j \left(\sum_{k=1}^{n}g_k(x)\tau_k \right)=\sum_{k=1}^{n}g_k(x)f_j(\tau_k)=g_j(x), \quad \forall x \in \mathcal{X}.
	\end{align*}
	$(\Leftarrow)$ Since $V$ is invertible, $\{\omega_j\}_{j=1}^n$ is a basis for $\mathcal{X}$. Now we see that $g_j(\omega_k)=f_j(V^{-1}V\tau_k)=f_j(\tau_k)=\delta_{j,k}$ for all $1\leq j, k \leq n$. Therefore $\{g_j\}_{j=1}^n$ is the coordinate functionals associated with $\{\omega_j\}_{j=1}^n$.  Since $V$ is an isometry, we have $\|\omega_j\|=\|\tau_j\|\leq 1$ for all $1\leq j \leq n$. Since $V$ is also  invertible, we have 
	\begin{align*}
		\|g_j\|&=\displaystyle\sup_{x\in \mathcal{X}\setminus \{0\}}\frac{|g_j(x)|}{\|x\|}=\displaystyle\sup_{x\in \mathcal{X}\setminus \{0\}}\frac{|f_j(V^{-1}x)|}{\|x\|}=\displaystyle\sup_{Vy \in \mathcal{X}\setminus \{0\}}\frac{|f_j(y)|}{\|Vy\|}\\
		&=\displaystyle\sup_{y \in \mathcal{X}\setminus \{0\}}\frac{|f_j(y)|}{\|y\|}=\|f_j\|\leq 1, \quad \forall 1\leq j \leq n.
	\end{align*}
\end{proof}

We now derive non-Archimedean version of Theorem \ref{K}.
\begin{theorem}\label{NGJ}
	(\textbf{Non-Archimedean  Ghobber-Jaming  Uncertainty Principle})
	Let $\mathcal{X}$  be a finite dimensional non-Archimedean Banach  space over $\mathbb{K}$.	Let $(\{f_j\}_{j=1}^n, \{\tau_j\}_{j=1}^n)$	and $(\{g_k\}_{k=1}^n, \{\omega_k\}_{k=1}^n)$ be orthonormal bases  for $\mathcal{X}$.  If $M,N\subseteq \{1, \dots, n\}$ are such that 
	\begin{align*}
		\displaystyle \max_{j \in M, k \in N}|g_k(\tau_j)|<1,
	\end{align*}
	then for all $x \in \mathcal{X}$, 	
	\begin{align}\label{NGH}
			\|x\|\leq  \left(\frac{1}{1-\displaystyle \max_{j \in M, k \in N}|g_k(\tau_j)|}\right)\max\left\{\displaystyle \max_{j \in M^c}|f_j(x) |, \displaystyle \max_{k \in N^c}|g_k(x)|\right\}.
	\end{align}
	In particular, if $x$ is supported on $M$ in the expansion using basis $(\{f_j\}_{j=1}^n, \{\tau_j\}_{j=1}^n)$	 and $x$ is supported on $N$ in the expansion using basis $(\{g_k\}_{k=1}^n, \{\omega_k\}_{k=1}^n)$, then $x=0$. 
\end{theorem}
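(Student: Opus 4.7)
The plan is to mimic the p-adic Hilbert space proof of the previous theorem, substituting the functional data $f_j(x), g_k(x), g_k(\tau_j)$ for the inner products $\langle x,\tau_j\rangle, \langle x,\omega_k\rangle, \langle \tau_j,\omega_k\rangle$ used there. First, for each subset $S\subseteq\{1,\dots,n\}$, I would introduce the partial-expansion operators
\begin{align*}
P_S^\tau x:=\sum_{j\in S}f_j(x)\tau_j,\qquad P_S^\omega x:=\sum_{k\in S}g_k(x)\omega_k.
\end{align*}
By the non-Archimedean Parseval formula, $\|P_S^\tau x\|=\max_{j\in S}|f_j(x)|$ and $\|P_S^\omega x\|=\max_{k\in S}|g_k(x)|$, so both operators have operator norm at most one, and their complementary versions $P_{M^c}^\tau$, $P_{N^c}^\omega$ pick out exactly the quantities appearing on the right-hand side of (\ref{NGH}).

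Next I would establish the key operator-norm estimate
\begin{align*}
\|P_N^\omega P_M^\tau\|\leq \max_{j\in M,\,k\in N}|g_k(\tau_j)|.
\end{align*}
This is a direct computation: expanding $P_N^\omega P_M^\tau x=\sum_{k\in N}\bigl(\sum_{j\in M}f_j(x)\,g_k(\tau_j)\bigr)\omega_k$, applying the Parseval formula for the norm, and bounding the inner sum by the ultrametric inequality yields the estimate, exactly mirroring the $\|P_NVP_M\|$ computation in the p-adic proof.

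With this estimate in hand, for any $y$ satisfying $P_M^\tau y=y$ (i.e., supported on $M$ in the $\tau$-basis), the decomposition $P_{N^c}^\omega y=y-P_N^\omega y=y-P_N^\omega P_M^\tau y$ combined with the reverse triangle inequality gives $\max_{k\in N^c}|g_k(y)|\geq (1-\|P_N^\omega P_M^\tau\|)\|y\|$. For an arbitrary $x\in\mathcal{X}$, writing $x=P_M^\tau x+P_{M^c}^\tau x$, applying the ultrametric inequality to this sum, invoking the previous bound on $P_M^\tau x$, and then rewriting $P_{N^c}^\omega P_M^\tau x = P_{N^c}^\omega x - P_{N^c}^\omega P_{M^c}^\tau x$ (using $\|P_{N^c}^\omega\|\leq 1$ and $\|P_{M^c}^\tau x\|=\max_{j\in M^c}|f_j(x)|$) produces the desired maximum. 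No step is genuinely difficult; the only subtlety is verifying that the reverse triangle inequality remains valid in the non-Archimedean setting, which it does, since $\|a\|\leq\max\{\|a-b\|,\|b\|\}$ already implies $\|a-b\|\geq\|a\|-\|b\|$.
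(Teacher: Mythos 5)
Your proposal is correct and follows essentially the same route as the paper: the same composed-projection norm bound, the same lower bound for vectors supported on $M$, and the same final decomposition $x=P_Mx+P_{M^c}x$ with the ultrametric inequality. The only (cosmetic) difference is that the paper transports the second basis into the first via the isometry $V:x\mapsto\sum_{k}g_k(x)\tau_k$ and works with $P_SV$, whereas you use the projections $P_S^{\omega}$ onto the $\omega$-basis directly; since $\|P_SVx\|=\|P_S^{\omega}x\|=\max_{k\in S}|g_k(x)|$, the two computations coincide line for line.
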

\begin{proof}
Given $S\subseteq \{1, \dots, n\}$, define 

\begin{align*}
	&P_Sx\coloneqq \sum_{j\in S}f_j(x)\tau_j, \quad \forall x \in \mathcal{X}, \\
	&\|x\|_{S, f}\coloneqq \displaystyle \max_{j \in S}|f_j(x)|, \quad \forall x \in \mathcal{X},\\
	&\|x\|_{S, g}\coloneqq \displaystyle \max_{j \in S}|g_j(x)|, \quad \forall x \in \mathcal{X}.
\end{align*}
Then 
\begin{align*}
	\|P_Sx\|=\left\|\sum_{j\in S}f_j(x)\tau_j\right\|=\displaystyle \max_{j \in S}|f_j(x)|=	\|x\|_{S, f}, \quad \forall x \in \mathcal{X}.
\end{align*}
Define $V:\mathcal{X}\ni x \mapsto \sum_{k=1}^{n}g_k(x)\tau_k \in \mathcal{X}$. Then $V$ is an invertible isometry. Using $V$ we get 
\begin{align*}
	\|P_SVx\|&=\left\|\sum_{j\in S}f_j(Vx)\tau_j\right\|=\left\|\sum_{j\in S}f_j\left(\sum_{k=1}^{n}g_k(x)\tau_k\right)\tau_j\right\|=\left\|\sum_{j\in S}\sum_{k=1}^{n}g_k(x)f_j(\tau_k)\tau_j\right\|\\
	&=\left\|\sum_{j\in S}g_j(x)\tau_j\right\|=\displaystyle \max_{j \in S}|g_j(x)|=	\|x\|_{S, g}, \quad \forall x \in \mathcal{X}.
\end{align*}
For $x \in \mathcal{X}$, we  now find 
\begin{align*}
	\|P_NVP_Mx\|&=\left\|\sum_{k\in N}f_k(VP_Mx)\tau_k\right\|=\max_{k \in N}|f_k(VP_Mx)|\\
	&=\max_{k \in N}\left|(f_kV) \left(\sum_{j\in M}f_j(x)\tau_j\right)\right|=\max_{k \in N}\left| \sum_{j\in M}f_j(x)f_k(V\tau_j)\right|\\
	&=\max_{k \in N}\left| \sum_{j\in M}f_j(x)f_k\left(\sum_{r=1}^{n}g_r(\tau_j)\tau_r\right)\right|
	=\max_{k \in N}\left| \sum_{j\in M}f_j(x)\sum_{r=1}^{n}g_r(\tau_j)f_k(\tau_r)\right|\\
	&=\max_{k \in N}\left| \sum_{j\in M}f_j(x)g_k(\tau_j)\right|\leq  \max_{k \in N}\max_{j \in M}|f_j(x)g_k(\tau_j)|\\
	&\leq \left(\max_{j \in M, k\in N}|g_k(\tau_j)|\right)\max_{j \in M}|f_j(x)|\leq \left(\max_{j \in M, k\in N}|g_k(\tau_j)|\right)\|x\|.
\end{align*}
So we have 
\begin{align}\label{FPVP}
	\|P_NVP_M\|\leq \displaystyle \max_{j \in M, k\in N}|g_k(\tau_j)|.
\end{align}
Now let $y \in \mathcal{X}$ be such that $\{j \in \{1, \dots, n\}: f_j(y)\neq 0\} \subseteq M.$
Then 
\begin{align*}
	\|P_NVy\|=\|P_NVP_My\|\leq \|P_NVP_M\|\|y\|
\end{align*}
 and 
\begin{align*}
	\|y\|_{N^c, g}&=\|P_{N^c}Vy\|=\|Vy-P_NVy\|\\
	&\geq \|Vy\|-\|P_NVy\|=\|y\|-\|P_NVy\| \\
	&\geq \|y\|-\|P_NVP_M\|\|y\|.
\end{align*}
Therefore 
\begin{align}\label{INP}
	\|y\|_{N^c, g}\geq (1-\|P_NVP_M\|)\|y\|.
\end{align}
Let $x \in \mathcal{X}$.   Note that $P_Mx$ satisfies $	\{j \in \{1, \dots, n\}: f_j(P_Mx)\neq 0\} \subseteq M.$ Now using (\ref{FPVP}) and (\ref{INP}) we get 

\begin{align*}
	\|x\|&=\|P_{M^c}x+P_Mx\|\leq \max\{\|P_{M^c}x\|, \|P_Mx\|\}\\
	&\leq \max \left\{\|P_{M^c}x\|, \frac{1}{1-\|P_NVP_M\|}	\|P_Mx\|_{N^c, g}\right\} \\
	&= \max \left\{\|P_{M^c}x\|, \frac{1}{1-\|P_NVP_M\|}	\|P_{N^c}VP_Mx\|\right\} \\
	&=\max \left\{\|P_{M^c}x\|, \frac{1}{1-\|P_NVP_M\|}	\|P_{N^c}V(x-P_{M^c}x)\|\right\} \\
	&\leq \max \left\{\|P_{M^c}x\|, \frac{1}{1-\|P_NVP_M\|}	\|P_{N^c}Vx\|, \frac{1}{1-\|P_NVP_M\|}	\|P_{N^c}VP_{M^c}x\|\right\} \\
	&\leq \max \left\{\|P_{M^c}x\|, \frac{1}{1-\|P_NVP_M\|}	\|P_{N^c}Vx\|, \frac{1}{1-\|P_NVP_M\|}	\|P_{M^c}x\|\right\} \\
	&=\max \left\{\frac{1}{1-\|P_NVP_M\|}	\|P_{M^c}x\|, \frac{1}{1-\|P_NVP_M\|}	\|P_{N^c}Vx\|\right\} \\
	&=\left(\frac{1}{1-\|P_NVP_M\|}\right)\max \left\{\|P_{M^c}x\|, \|P_{N^c}Vx\|\right\} \\
	&=\left(\frac{1}{1-\|P_NVP_M\|}\right)\max \left\{\|x\|_{M^c, f}, \|x\|_{N^c, g}\right\} \\
	&= \left(\frac{1}{1-\|P_NVP_M\|}\right)\max\left\{\displaystyle \max_{j \in M^c}|f_j(x)|, \displaystyle \max_{k \in N^c}|g_k(x)|\right\}\\
	&\leq  \left(\frac{1}{1-\displaystyle \max_{j \in M, k \in N}|g_k(\tau_j)|}\right)\max\left\{\displaystyle \max_{j \in M^c}|f_j(x) |, \displaystyle \max_{k \in N^c}|g_k(x)|\right\}.
\end{align*}
\end{proof}
By interchanging orthonormal bases in Theorem  \ref{NGJ} we  get the following theorem. 
\begin{theorem}
	(\textbf{Non-Archimedean  Ghobber-Jaming  Uncertainty Principle})
Let $\mathcal{X}$  be a finite dimensional non-Archimedean Banach  space over $\mathbb{K}$.	Let $(\{f_j\}_{j=1}^n, \{\tau_j\}_{j=1}^n)$	and $(\{g_k\}_{k=1}^n, \{\omega_k\}_{k=1}^n)$ be orthonormal bases  for $\mathcal{X}$.  If $M,N\subseteq \{1, \dots, n\}$ are such that 
\begin{align*}
	\displaystyle \max_{j \in M, k \in N}|f_j(\omega_k)|<1,
\end{align*}
then for all $x \in \mathcal{X}$, 	
\begin{align*}
	\|x\|\leq  \left(\frac{1}{1-\displaystyle \max_{j \in M, k \in N}|f_j(\omega_k)|}\right)\max\left\{\displaystyle \max_{k \in M^c}|g_k(x) |, \displaystyle \max_{j \in N^c}|f_j(x)|\right\}.
\end{align*}
In particular, if $x$ is supported on $M$ in the expansion using basis $(\{f_j\}_{j=1}^n, \{\tau_j\}_{j=1}^n)$ and $x$ is supported on $N$ in the expansion using basis $(\{g_k\}_{k=1}^n, \{\omega_k\}_{k=1}^n)$, then $x=0$. 
\end{theorem}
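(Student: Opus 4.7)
The plan is to deduce this theorem directly from Theorem \ref{NGJ} by simply interchanging the roles of the two given orthonormal bases. Since nothing in Theorem \ref{NGJ} privileges one basis over the other, the same conclusion must hold with the roles of $(\{f_j\}_{j=1}^n,\{\tau_j\}_{j=1}^n)$ and $(\{g_k\}_{k=1}^n,\{\omega_k\}_{k=1}^n)$ exchanged. The only subtlety is that the hypothesis of Theorem \ref{NGJ} is not symmetric in the two bases at the level of notation: it is phrased in terms of $|g_k(\tau_j)|$, which couples the functionals of the second basis with the vectors of the first. Swapping the two bases therefore produces a hypothesis phrased in terms of $|f_k(\omega_j)|$, which matches the hypothesis of the current theorem after a renaming of dummy indices.

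Concretely, I would apply Theorem \ref{NGJ} to the pair of bases $(\{g_k\}_{k=1}^n,\{\omega_k\}_{k=1}^n)$ (playing the role of the ``first'' basis) and $(\{f_j\}_{j=1}^n,\{\tau_j\}_{j=1}^n)$ (playing the role of the ``second'' basis), with index subsets $M' := N$ and $N' := M$. The hypothesis of Theorem \ref{NGJ} under this substitution reads
\[
\max_{j \in N,\ k \in M}|f_k(\omega_j)| < 1,
\]
which, after the trivial renaming $j \leftrightarrow k$ of dummy variables, is exactly $\max_{j \in M, k \in N}|f_j(\omega_k)| < 1$, i.e.\ the hypothesis we are given. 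The conclusion of Theorem \ref{NGJ} then yields, after the same substitution and renaming, the bound
\[
\|x\| \leq \left(\frac{1}{1 - \max_{j \in M, k \in N}|f_j(\omega_k)|}\right)\max\!\left\{\max_{k \in M^c}|g_k(x)|,\ \max_{j \in N^c}|f_j(x)|\right\},
\]
as claimed. The ``in particular'' clause follows by exactly the same transfer from the corresponding statement in Theorem \ref{NGJ}, noting that the notion of being supported on a set in either basis is symmetric in the two bases.

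There is no real mathematical obstacle here; the entire content of the proof is careful index bookkeeping under the basis swap. If one prefers, one can equally well just re-run the proof of Theorem \ref{NGJ} verbatim with the operator $V$ replaced by $V : \mathcal{X} \ni x \mapsto \sum_{j=1}^n f_j(x)\omega_j$ (the isometry sending the $\tau$-basis to the $\omega$-basis rather than the reverse), the projections $P_S$ now being $P_S x := \sum_{k \in S} g_k(x)\omega_k$, and the subsets $M$ and $N$ switched in the $\|P_N V P_M\|$ estimate; each step of that proof is symmetric in the two bases, so no new estimate has to be produced.
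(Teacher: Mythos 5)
Your strategy --- applying Theorem \ref{NGJ} with the two bases exchanged and with $M':=N$, $N':=M$ --- is exactly the route the paper takes (the paper gives no more detail than ``by interchanging orthonormal bases''), and the hypothesis does match up exactly as you say. The problem is the final bookkeeping step. In Theorem \ref{NGJ} the set $M$ is tied throughout to the \emph{first} basis (its complement indexes the functionals $f_j$ in the conclusion) and $N$ to the \emph{second} (its complement indexes the $g_k$). After your substitution the first basis is $(\{g_k\}_{k=1}^n,\{\omega_k\}_{k=1}^n)$ with index set $M'=N$, and the second is $(\{f_j\}_{j=1}^n,\{\tau_j\}_{j=1}^n)$ with index set $N'=M$, so the conclusion of Theorem \ref{NGJ} actually reads
\[
\|x\|\leq \left(\frac{1}{1-\displaystyle\max_{j\in M,\,k\in N}|f_j(\omega_k)|}\right)\max\left\{\displaystyle\max_{k\in N^c}|g_k(x)|,\ \displaystyle\max_{j\in M^c}|f_j(x)|\right\},
\]
with $g$ sampled on $N^c$ and $f$ on $M^c$. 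You have written $g$ on $M^c$ and $f$ on $N^c$; renaming dummy indices cannot repair this, because $M^c$ and $N^c$ are different sets. The same transposition occurs in your alternative ``re-run the proof with $M$ and $N$ switched'' sketch.

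This is not cosmetic: the inequality in the form you (and the paper) state is false. Take $\mathcal{X}=\mathbb{Q}_p^3$ with the sup norm, $\tau_j$ the standard basis with coordinate functionals $f_j$, and $\omega_1=\tau_2$, $\omega_2=\tau_3$, $\omega_3=\tau_1$, so that $g_1=f_2$, $g_2=f_3$, $g_3=f_1$. With $M=\{1\}$ and $N=\{1,2\}$ one has $f_1(\omega_1)=f_1(\omega_2)=0$, so the hypothesis holds and the constant is $1$; but for $x=\tau_2$ the stated right-hand side is $\max\{|g_2(x)|,|g_3(x)|,|f_3(x)|\}=0$ while $\|x\|=1$. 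The corrected right-hand side $\max\{\max_{k\in N^c}|g_k(x)|,\max_{j\in M^c}|f_j(x)|\}$ equals $1$ here and is what your substitution genuinely proves. The ``in particular'' clause is fine, but note that it follows from the corrected inequality (support on $M$ kills the $f_j$ for $j\in M^c$ and support on $N$ kills the $g_k$ for $k\in N^c$), not from the displayed one. So you should either prove the corrected statement or explicitly flag that the displayed conclusion needs its index sets transposed.
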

We note that Inequality (\ref{NGH}) will not give Inequality (\ref{PGJ})  as p-adic norm is not given using p-adic inner product.

We end the paper with an important open question. Let $\mathcal{H}$ be a finite dimensional Hilbert space. Given an orthonormal basis  $\{\tau_j\}_{j=1}^n$ for $\mathcal{H}$, the \textbf{ (finite) Shannon entropy}  at a point $h \in \mathcal{H}_\tau$ is defined as 
\begin{align*}
	S_\tau (h)\coloneqq - \sum_{j=1}^{n} \left|\left \langle h, \tau_j\right\rangle \right|^2\log \left|\left \langle h, \tau_j\right\rangle \right|^2\geq 0,
\end{align*}
where $\mathcal{H}_\tau\coloneqq \{h \in \mathcal{H}: \|h\|=1, \langle h , \tau_j \rangle \neq 0, 1\leq j \leq n\}$ \cite{DEUTSCH}. In 1983, Deutsch derived following uncertainty principle for Shannon entropy which is fundamental to several developments in Mathematics and   Physics \cite{DEUTSCH}.
\begin{theorem}\cite{DEUTSCH} (\textbf{Deutsch Uncertainty Principle})  \label{DU}
	Let $\{\tau_j\}_{j=1}^n$,  $\{\omega_j\}_{j=1}^n$ be two orthonormal bases for a  finite dimensional Hilbert space $\mathcal{H}$. Then 
	\begin{align}\label{DUP}
		2 \log n \geq S_\tau (h)+S_\omega (h)\geq -2 \log \left(\frac{1+\displaystyle \max_{1\leq j, k \leq n}|\langle\tau_j , \omega_k\rangle|}{2}\right)	\geq 0, \quad \forall h \in \mathcal{H}_\tau.
	\end{align}
\end{theorem}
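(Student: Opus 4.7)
The plan is, for $h \in \mathcal{H}_\tau$, to set $p_j \coloneqq |\langle h, \tau_j\rangle|^2$ and $q_k \coloneqq |\langle h, \omega_k\rangle|^2$; Parseval gives $\sum_j p_j = \sum_k q_k = 1$, so $\{p_j\}$ and $\{q_k\}$ are probability distributions on $\{1,\dots,n\}$. Write $c \coloneqq \max_{1 \leq j,k \leq n}|\langle \tau_j, \omega_k\rangle|$. The outer two inequalities in (\ref{DUP}) are immediate: Cauchy--Schwarz on unit vectors gives $c \leq 1$, so $(1+c)/2 \leq 1$ and therefore $-2\log\bigl((1+c)/2\bigr) \geq 0$; and the standard maximum-entropy inequality for a distribution on $n$ atoms (concavity of $-x\log x$ via Jensen) yields $S_\tau(h) \leq \log n$ and $S_\omega(h) \leq \log n$, whence the leftmost bound.

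The heart of the argument is the middle inequality, which I would reduce to the pointwise estimate
\[ p_j + q_k \leq 1 + |\langle \tau_j, \omega_k\rangle|, \qquad \forall\, j, k. \]
To prove this, consider the self-adjoint operator $A \coloneqq P_j + Q_k$, where $P_j$ and $Q_k$ are the orthogonal rank-one projections onto $\mathrm{span}\{\tau_j\}$ and $\mathrm{span}\{\omega_k\}$ respectively. Then $\langle h, Ah\rangle = p_j + q_k$, so it suffices to show $\|A\| \leq 1 + |\langle \tau_j, \omega_k\rangle|$. The range of $A$ lies in the (at most) two-dimensional subspace $\mathrm{span}\{\tau_j, \omega_k\}$; writing $A$ as a $2\times 2$ matrix in an orthonormal basis of this subspace obtained by Gram--Schmidt, an elementary calculation yields $\mathrm{tr}(A) = 2$ and $\det(A) = 1 - |\langle \tau_j, \omega_k\rangle|^2$, so its nonzero eigenvalues are $1 \pm |\langle \tau_j, \omega_k\rangle|$ and $\|A\| = 1 + |\langle \tau_j, \omega_k\rangle| \leq 1+c$.

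With the pointwise bound in hand, AM--GM gives $\sqrt{p_j q_k} \leq (p_j+q_k)/2 \leq (1+c)/2$, hence $p_j q_k \leq \bigl((1+c)/2\bigr)^2$ for all $j,k$. Using $\sum_j p_j = \sum_k q_k = 1$, I then rewrite
\[ S_\tau(h) + S_\omega(h) = -\sum_{j,k} p_j q_k \log(p_j q_k) \]
(with the convention $0\log 0 = 0$, which is consistent on both sides). Since $-\log$ is decreasing and $p_j q_k \leq \bigl((1+c)/2\bigr)^2$, each summand is at least $-2\log\bigl((1+c)/2\bigr) \cdot p_j q_k$, and summing against these probability weights produces the desired lower bound. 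I expect the main obstacle to be the pointwise operator bound in the middle paragraph; once the eigenvalues of $P_j + Q_k$ are pinned down, the rest is a routine log-monotonicity argument. A minor subtlety is the degenerate case where $\tau_j$ and $\omega_k$ are collinear, in which $|\langle \tau_j, \omega_k\rangle| = 1$ and $\mathrm{span}\{\tau_j, \omega_k\}$ collapses to one dimension, but $\|A\| = 2 = 1+1$ remains consistent with the claim.
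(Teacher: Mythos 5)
Your proof is correct. Note that the paper does not actually prove Theorem \ref{DU}: it is quoted from \cite{DEUTSCH}, and the paper only remarks (citing \cite{KRISHNA2}) that the Buzano inequality (\ref{BI}) yields a simple proof. Your argument is a complete, self-contained proof that essentially recovers Deutsch's original strategy: the heart is the pointwise bound $|\langle h,\tau_j\rangle|^2+|\langle h,\omega_k\rangle|^2\leq 1+|\langle\tau_j,\omega_k\rangle|$, which you obtain by computing the spectrum of $A=P_j+Q_k$ on $\mathrm{span}\{\tau_j,\omega_k\}$ (trace $2$, determinant $1-|\langle\tau_j,\omega_k\rangle|^2$, hence top eigenvalue $1+|\langle\tau_j,\omega_k\rangle|$ --- this checks out, as does your handling of the collinear degenerate case). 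From there AM--GM gives $p_jq_k\leq\bigl((1+c)/2\bigr)^2$, and the product-distribution identity $S_\tau(h)+S_\omega(h)=-\sum_{j,k}p_jq_k\log(p_jq_k)$ together with monotonicity of $-\log$ finishes the middle inequality; the outer bounds via Jensen and Cauchy--Schwarz are routine. The Buzano route alluded to in the paper reaches the same intermediate estimate $\sqrt{p_jq_k}=|\langle\tau_j,h\rangle\langle h,\omega_k\rangle|\leq(1+|\langle\tau_j,\omega_k\rangle|)/2$ in one application of (\ref{BI}) to unit vectors, bypassing the operator computation; what your version buys is self-containedness at the cost of the two-by-two eigenvalue calculation. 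Your bookkeeping with the $0\log 0=0$ convention for indices $k$ with $\langle h,\omega_k\rangle=0$ (since $h\in\mathcal{H}_\tau$ only guarantees $\langle h,\tau_j\rangle\neq 0$) is also handled correctly.
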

Let $\{\tau_j\}_{j=1}^n$ be a p-adic orthonormal basis for a p-adic Hilbert space $\mathcal{X}$. Given a p-adic  orthonormal basis  $\{\tau_j\}_{j=1}^n$ for $\mathcal{X}$, the \textbf{ (finite) p-adic Shannon entropy}  at a point $x \in \mathcal{X}_\tau$ is defined as 
\begin{align*}
	S_\tau (x)\coloneqq - \sum_{j=1}^{n} \left|\left \langle x, \tau_j\right\rangle \right|^2\log \left|\left \langle x, \tau_j\right\rangle \right|^2\geq 0,
\end{align*}
where $\mathcal{H}_\tau\coloneqq \{x \in \mathcal{X}: \|x\|=1, \langle x , \tau_j \rangle \neq 0, 1\leq j \leq n\}$. We then have the following problem.
\begin{question}
	What is the p-adic version of Deutsch uncertainty principle (\ref{DUP})?
\end{question}
It has been observed very recently  \cite{KRISHNA2} that using Buzano inequality, (which we state below) one can give a simple proof of Inequality (\ref{DUP}).

\begin{theorem}  \cite{FUJIIKUBO, BUZANO, STEELE} \label{BT}   (\textbf{Buzano Inequality})
	Let $\mathcal{H}$ be a Hilbert space. Then 
	\begin{align}\label{BI}
		|\langle \tau, h\rangle \langle h, \omega\rangle |\leq \frac{\|h\|^2(|\langle \tau, \omega \rangle|+\|\tau\|\|\omega\|)}{2}, \quad \forall h, \tau, \omega \in \mathcal{H}.
	\end{align}
\end{theorem}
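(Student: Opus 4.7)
The plan is to reduce to the case $\|h\|=1$ and then exploit the orthogonal decomposition of $\tau$ and $\omega$ along the line spanned by $h$. If $h=0$ the inequality is trivially $0\leq 0$; otherwise, both sides of \eqref{BI} scale as $\|h\|^2$ under $h \mapsto \lambda h$, so we may assume $\|h\|=1$ and aim at $2|\langle \tau,h\rangle\langle h,\omega\rangle|\leq |\langle\tau,\omega\rangle|+\|\tau\|\|\omega\|$.

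With $\|h\|=1$, write $\tau=\langle\tau,h\rangle h+\tau'$ and $\omega=\langle\omega,h\rangle h+\omega'$ where $\tau',\omega'\perp h$. Expanding sesquilinearly and using $\langle h,h\rangle =1$ yields the identity
\begin{align*}
\langle \tau, \omega\rangle \;=\; \langle \tau, h\rangle \langle h, \omega\rangle \;+\; \langle \tau', \omega'\rangle.
\end{align*}
This is the structural heart of the argument: it shows that $\langle\tau,h\rangle\langle h,\omega\rangle$ differs from $\langle\tau,\omega\rangle$ by exactly the inner product of the parts orthogonal to $h$. Apply the triangle inequality and the ordinary Cauchy--Schwarz inequality to that residual to get
\begin{align*}
|\langle \tau, h\rangle\langle h, \omega\rangle| \;\leq\; |\langle \tau, \omega\rangle| \;+\; \|\tau'\|\,\|\omega'\|. \tag{$\ast$}
\end{align*}

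The main obstacle, and the true content of Buzano, is to bound $\|\tau'\|\|\omega'\|$ by $\|\tau\|\|\omega\|-|\langle\tau,h\rangle\langle h,\omega\rangle|$, since this is what allows one to absorb the extra $|\langle\tau,h\rangle\langle h,\omega\rangle|$ term and gain the factor $1/2$. For this I would apply Pythagoras, $\|\tau\|^2=|\langle\tau,h\rangle|^2+\|\tau'\|^2$ and $\|\omega\|^2=|\langle\omega,h\rangle|^2+\|\omega'\|^2$, and then use the two-dimensional Cauchy--Schwarz inequality on the vectors $(|\langle\tau,h\rangle|,\|\tau'\|)$ and $(|\langle\omega,h\rangle|,\|\omega'\|)$ in $\mathbb{R}^2$:
\begin{align*}
|\langle\tau,h\rangle|\,|\langle\omega,h\rangle| + \|\tau'\|\,\|\omega'\| \;\leq\; \sqrt{|\langle\tau,h\rangle|^2+\|\tau'\|^2}\,\sqrt{|\langle\omega,h\rangle|^2+\|\omega'\|^2} \;=\; \|\tau\|\,\|\omega\|.
\end{align*}
Rearranged, this gives $\|\tau'\|\|\omega'\|\leq \|\tau\|\|\omega\|-|\langle\tau,h\rangle\langle h,\omega\rangle|$.

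Finally, substitute this bound into $(\ast)$, move the resulting $|\langle\tau,h\rangle\langle h,\omega\rangle|$ term from the right to the left, divide by $2$, and restore the factor $\|h\|^2$ by undoing the normalization. This yields \eqref{BI}. The whole proof is essentially one identity plus two Cauchy--Schwarz applications; the only nontrivial step is recognizing that the two-dimensional Cauchy--Schwarz coupling $(|a|,c)$ with $(|b|,d)$ is what converts the ``$|\langle\tau,\omega\rangle|+\|\tau'\|\|\omega'\|$'' of $(\ast)$ into ``$|\langle\tau,\omega\rangle|+\|\tau\|\|\omega\|-|\langle\tau,h\rangle\langle h,\omega\rangle|$''.
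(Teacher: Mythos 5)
Your proof is correct. The paper does not actually prove this statement --- it is quoted as a known result with citations to Fujii--Kubo, Buzano, and Steele --- so there is no in-paper argument to compare against; but your argument is the standard one (essentially the Fujii--Kubo proof): the identity $\langle\tau,\omega\rangle=\langle\tau,h\rangle\langle h,\omega\rangle+\langle\tau',\omega'\rangle$ for the components orthogonal to a unit vector $h$, followed by Cauchy--Schwarz on the residual and the two-dimensional Cauchy--Schwarz inequality applied to $\bigl(|\langle\tau,h\rangle|,\|\tau'\|\bigr)$ and $\bigl(|\langle\omega,h\rangle|,\|\omega'\|\bigr)$, which is exactly what produces the factor $\tfrac12$. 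All steps check out, including the normalization (both sides are homogeneous of degree $2$ in $h$) and the use of $\overline{\langle\omega,h\rangle}=\langle h,\omega\rangle$ to identify $|\langle\tau,h\rangle|\,|\langle\omega,h\rangle|$ with $|\langle\tau,h\rangle\langle h,\omega\rangle|$.
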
 
As  Inequality (\ref{BI}) easily gives Inequality (\ref{DUP}), we formulate following problem whose solution will give a p-adic version of Inequality (\ref{DUP}).
\begin{question}
		What is the p-adic version of Buzano Inequality (\ref{BI})?
\end{question}
We note that there is an improvement of Inequality (\ref{DUP}) due to Maassen and Uffink in 1988 \cite{MAASSENUFFINK} motivated from the conjecture by Kraus \cite{KRAUS} made in 1987 (using Riesz-Thorin interpolation), whose p-adic version is also not known. 

\section{Acknowledgments}
This paper has been partially developed when the author attended the workshop “Quantum groups, tensor categories and quantum field theory”, held in University of Oslo, Norway from January 13 to 17, 2025. This event was organized by University of Oslo, Norway and funded by the Norwegian Research Council through the “Quantum Symmetry” project.\\
The author thanks the anonymous reviewer for his/her study and  suggestions, which
improved the article.

 \bibliographystyle{plain}
 \bibliography{reference.bib}

\end{document}